\newlength{\defbaselineskip}
\newtheorem{theorem}{Theorem}[section]
\newtheorem{remark}{Remark}[section]
\numberwithin{equation}{section}
\newtheorem{corollary}{Corollary}[section]
\begin{document}
\begin{center}
\textbf{\Large{Euler-type integrals for the generalized hypergeometric matrix function }}
\end{center}
\begin{center}
{\bf  Ankit Pal\textsuperscript{a}, Kiran Kumari\textsuperscript{b}}\\ 
ankit.pal@vitbhopal.ac.in, kiran.kumariprajapati@gmail.com,  \\
\textsuperscript{a}Department of Mathematics, School of Advanced Sciences \& Languages\\
VIT Bhopal University, Sehore-466 114, Madhya Pradesh, India.\\
\textsuperscript{b}Department of Mathematics and Humanities,\\
Sardar Vallabhbhai National Institute of Technology, Surat-395 007, Gujarat, India. 
\end{center}
\rule{\textwidth}{0.2pt}
\textbf{Abstract:}
In this paper, we investigate the Euler-type integral representations for the generalized hypergeometric matrix function and develop some transformations in terms of hypergeometric matrix functions. Furthermore, unit and half arguments have been provided for several particular cases.\\[0.2cm]
\noindent
{\bf Keywords:} Generalized hypergeometric matrix function, Matrix functional calculus, Gamma and Beta matrix function.\\[0.2cm]
\noindent\textbf{AMS Subject Classification (2010):} 15A15, 33C05, 33C20, 44A99.\\[0.1cm]
\rule{\textwidth}{0.2pt}
\section{Introduction and Preliminaries}
In recent years, mathematical analysis has paid a lot of attention to the theory of special matrix functions and polynomials. The matrix analogue of hypergeometric functions play a significant role in the area of applied and pure analysis. These functions appears in the study of statistics \cite{Constantine}, probability theory \cite{Seaborn} and Lie theory \cite{James}. The matrix analogue of the Gauss hypergeometric function was introduced by J\'odar and Cort\'es \cite{Jodar1}. They studied the integral representation and matrix differential equation. Later, Abdalla \cite{Abdalla,Abdalla1}, Dwivedi and Sahai \cite{Dwivedi,Dwivedi1} studied the various properties like integral and differential relations, finite sum formulas, generating functions of special matrix functions. Particularly, these functions play a vital role in solving numerous problems of mathematical physics, engineering and mathematical sciences.\\

Throughout this work, we consider the complex space $\mathbf{C}^{R\times R}$ of complex matrices of common order $R$. For any matrix $P\in\mathbf{C}^{R\times R}$, $\sigma(P)$ is the spectrum of $P$ and
\begin{align}
a(P) = \max\left\lbrace \Re(z) : z\in\sigma(P)\right\rbrace, \quad b(P) = \min\left\lbrace \Re(z) : z\in\sigma(P)\right\rbrace,
\end{align}
where $a(P)$ is the spectral abscissa of $P$ and $b(P) = - a(-P)$. A Hermitian matrix $P$ in $\mathbf{C}^{R\times R}$ is a positive stable matrix if $\Re(\lambda)>0$ for all $\lambda\in\sigma(P)$, where $\sigma(P)$ is the set of all eigenvalues of $P$ or spectrum of $A$ and its two-norm is given by
\begin{align*}
||P|| = \sup_{x\neq 0} \frac{||Px||_2}{||x||_2} = \max\left\lbrace \sqrt{\lambda}: \lambda \in \sigma(P^*P) \right\rbrace,
\end{align*}
where for any vector $x\in\mathbf{C}^r$, $||x||_2 = (x^*x)^{1/2}$ is the Euclidean norm of $x$ and $P^*$ denotes the conjugate transpose of $P$. $I$ and $\mathbf{0}$ stands for the identity matrix and null matrix in $\mathbf{C}^{R\times R}$, respectively. Taking into account the Sch\"{u}r decomposition of a matrix $P$ \cite{Golud}, we have 
\begin{align*}
\left\| e^{Pt} \right\| \leq e^{t a(P)} \sum_{u=0}^{r-1} \frac{\left(\left\| P \right\| r^{1/2}t\right)^u}{u!} \quad (t\geq 0),
\end{align*}
which yields
\begin{align*}
\left\| t^P \right\| \leq \left\| e^{P \ln t} \right\|  \leq t^{a(P)} \sum_{u=0}^{r-1} \frac{\left(\left\| P \right\| r^{1/2} \ln t\right)^u}{u!} \quad (t\geq 1).
\end{align*}
If $f(z)$ and $g(z)$ are holomorphic functions of the complex variable $z$, which are defined in an open set $\Omega$ of the complex plane, and $P$ is a matrix in $\mathbf{C}^{R\times R}$ with $\sigma(P)\subset \Omega$, then from the properties of the matrix functional calculus \cite{Dunford}, it follows that
\begin{align*}
f(P)g(P) = g(P)f(P).
\end{align*} 
Furthermore, if $Q\in \mathbf{C}^{R\times R}$ with $\sigma(Q)\subset \Omega$, and if $PQ=QP$, then
\begin{align}
f(P)g(Q) = g(Q)f(P).
\end{align}
The logarithmic norm of a matrix $P\in \mathbf{C}^{R\times R}$ is defined as (see \cite{Cortes,HuGD}),
\begin{align}
\mu(P) = \lim_{k\to 0} \frac{\left\| I+kP\right\|-1}{k} = \max \left\lbrace z \ | \ z \in \sigma\left(\frac{P+P^*}{2}\right)\right\rbrace
\end{align}
Let the number $\tilde{\mu}(P)$ such that
\begin{align}
\tilde{\mu}(P) = -\mu(-P) = \min \left\lbrace z \ | \ z \in \sigma\left(\frac{P+P^*}{2}\right)\right\rbrace.
\end{align}
The reciprocal gamma function $\Gamma^{-1}(z) = 1/\Gamma(z)$ is an entire function of the complex variable $z$. The image of $\Gamma^{-1}(z)$ acting on $P$, denoted by $\Gamma^{-1}(P)$, is a well defined matrix. If $P+nI$ is invertible for all integers $n\geq 0$, then the reciprocal gamma function is defined as (see \cite{Jodar3}),
\begin{align}
\Gamma^{-1}(P) = P(P+I)\dots (P+(n-1)I) \ \Gamma^{-1}(P+nI), \ \ n\geq 1.
\end{align}
By applications of the matrix functional calculus, the Pochhammer symbol \cite{Jodar3} for $P\in \mathbf{C}^{R\times R}$ is given by
\begin{align}\label{PCH1}
(P)_m = \begin{cases} 
     I, \;\;\;\;\;\;\;\;\;\; \hspace{4cm}          \text{if} \; m=0\\
      P(P+I)\dots(P+(m-1)I),   \hspace{0.8cm} \text{if} \; m\geq 1,
   \end{cases}
\end{align} 
which gives
\begin{align}\label{PCH2}
(P)_m = \Gamma^{-1}(P) \ \Gamma(P+mI), \quad m\geq 1.
\end{align}
If $P\in \mathbf{C}^{R\times R}$ is a positive stable matrix and $m\geq 1$ is an integer, then the gamma matrix function can be represented in the following limit form as \cite{Jodar1}:
\begin{align}
\Gamma(P) = \lim_{m\to \infty} (m-1)! \ (P)_m^{-1} \ m^P.
\end{align}
Let $P$ and $Q$ be two positive stable matrices in $\mathbf{C}^{R\times R}$. The gamma matrix function $\Gamma(P)$ and the beta matrix function $B(P,Q)$ have been defined in \cite{Jodar1,Jodar3}, as follows
\begin{align}
\Gamma(P) = \int_0^\infty e^{-t} \ t^{P-1} dt; \quad t^{P-1} = \exp((P-I)\ln t),
\end{align}
and
\begin{align}
B(P,Q) = \int_0^1 t^{P-1} \ (1-t)^{Q-1} dt.
\end{align}
Let P and Q be commuting matrices in $\mathbf{C}^{R\times R}$ such that the matrices $P+nI, Q+nI$ and $P+Q+nI$ are invertible for every integer $n\geq 0$, then according to \cite{Jodar3}, we have
\begin{align}
B(P,Q) = \Gamma(P) \Gamma(Q) \left[ \Gamma(P+Q)\right]^{-1}.
\end{align}

\section{Main results}

Dwivedi and Sahai \cite{Dwivedi} introduced a natural generalization of hypergeometric matrix function called generalized hypergeometric matrix function and defined by
\begin{align}\label{sahai1}
{}_pF_q\left(P_1,\dots,P_p;Q_1,\dots,Q_q;z\right) = \sum_{m=0}^\infty\frac{(P_1)_m\dots(P_p)_m(Q_1)_m^{-1}\dots (Q_q)_m^{-1}}{m!} z^m,
\end{align}
where $P_i,Q_j \in \mathbf{C}^{R\times R}, 1\leq i \leq p,1 \leq j\leq q$, such that $Q_j+mI, 1 \leq j\leq q $ are invertible for all integers $m\geq 0$. \\

In the present work, we consider an Euler-type integral and present some integral representations of ${}_3F_2(\cdot)$ matrix function using the suitable adjustment of matrix parameters.

\begin{theorem}\label{thm1}
Let $P, Q$ and $R$ be commuting matrices in $\mathbf{C}^{R\times R}$ such that $Q,R$ and $R-Q$ are positive stable. Then, for $|z|<1$, we have the following integral representation:
\begin{align}\label{thm1a}
{}_3F_2\left(P,\frac{Q}{2},\frac{Q+I}{2};\frac{R}{2},\frac{R+I}{2};z\right) = \frac{\Gamma(R)}{\Gamma(Q)\Gamma(R-Q)} \int_0^1 u^{Q-I} (1-u)^{R-Q-I} (1-zu^2)^{-P} du. 
\end{align}
\end{theorem}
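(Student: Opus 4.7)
\medskip
\noindent\textbf{Proof plan.}
The plan is to evaluate the right-hand side by a standard ``expand under the integral'' argument and then recognize the resulting series as the advertised ${}_3F_2$. Concretely, I would start from the integrand $(1-zu^2)^{-P}$ and apply the binomial matrix expansion
\[
(1-zu^2)^{-P} = \sum_{m=0}^\infty \frac{(P)_m}{m!}\, z^m\, u^{2m},
\]
which is valid for $|zu^2|\le|z|<1$; uniform convergence on $[0,1]$ with respect to the matrix norm follows from the bound $\|(P)_m\|\le \|P\|(\|P\|+1)\cdots(\|P\|+m-1)$ combined with $|zu^2|\le|z|<1$. This uniform convergence justifies interchanging the sum and the integral, leaving me with
\[
\sum_{m=0}^\infty \frac{(P)_m\, z^m}{m!}\int_0^1 u^{Q+(2m-1)I}(1-u)^{R-Q-I}\,du.
\]

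\medskip
\noindent In the next step I would recognize the integral as the beta matrix function $B(Q+2mI,\,R-Q)$, which is well defined because $Q+2mI$ and $R-Q$ are positive stable for every $m\ge 0$ (the latter by hypothesis, the former because $Q$ is positive stable). Using the commutativity of $P,Q,R$ and the identity $B(A,B)=\Gamma(A)\Gamma(B)\Gamma(A+B)^{-1}$ recalled in the preliminaries, I would rewrite this factor using~\eqref{PCH2} to obtain
\[
\frac{\Gamma(R)}{\Gamma(Q)\Gamma(R-Q)}\,B(Q+2mI,R-Q) \;=\; (Q)_{2m}\,(R)_{2m}^{-1},
\]
again using that $P,Q,R$ mutually commute so that the $\Gamma$'s can be freely reordered.

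\medskip
\noindent The final step is to split the shifted Pochhammer symbols via the matrix duplication identity
\[
(A)_{2m} \;=\; 2^{2m}\,(A/2)_m\bigl((A+I)/2\bigr)_m,
\]
applied to both $A=Q$ and $A=R$; the two factors on the right commute because $A/2$ and $(A+I)/2$ commute, so the matrix Pochhammer symbols are well defined. The $2^{2m}$ factors cancel between numerator and denominator, and what remains is precisely
\[
\sum_{m=0}^\infty \frac{(P)_m\,(Q/2)_m\,((Q+I)/2)_m\,(R/2)_m^{-1}\,((R+I)/2)_m^{-1}}{m!}\,z^m,
\]
which is the series defining ${}_3F_2(P,Q/2,(Q\!+\!I)/2;R/2,(R\!+\!I)/2;z)$ in~\eqref{sahai1}.

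\medskip
\noindent\textbf{Main obstacle.} The bookkeeping is routine; the only subtle point is making sure that every matrix manipulation respects commutativity. In particular, to factor $(Q)_{2m}$ via the duplication formula and to combine $\Gamma(R)\,\Gamma(Q)^{-1}\,\Gamma(Q+2mI)\,\Gamma(R+2mI)^{-1}$ into $(Q)_{2m}(R)_{2m}^{-1}$ without ambiguity, I must invoke the commutativity hypothesis on $P,Q,R$ and the functional-calculus fact that polynomials in a single matrix commute with one another. Justifying the termwise integration in the matrix-norm sense, while straightforward, is the only analytic point that needs explicit mention.
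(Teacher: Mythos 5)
Your proposal is correct and follows essentially the same route as the paper's proof: both rest on the matrix duplication identity $(A)_{2m}=2^{2m}(A/2)_m((A+I)/2)_m$, the beta matrix function evaluation $B(Q+2mI,R-Q)$ turned into $(Q)_{2m}(R)_{2m}^{-1}$ via \eqref{PCH2}, and termwise integration of the binomial series for $(1-zu^2)^{-P}$. The only difference is that you run the computation from the integral to the series rather than the reverse, and you supply the justification for interchanging sum and integral that the paper leaves implicit.
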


\begin{proof}
From the left-hand side of (\ref{thm1a}), we find that
\begin{align}\label{thm1b}
{}_3F_2\left(P,\frac{Q}{2},\frac{Q+I}{2};\frac{R}{2},\frac{R+I}{2};z\right) = \sum_{m=0}^\infty \frac{\left(P\right)_m \left(\frac{Q}{2}\right)_m\left(\frac{Q+I}{2}\right)_m}{\left(\frac{R}{2}\right)_m\left(\frac{R+I}{2}\right)_m} \frac{z^m}{m!}.
\end{align}
Using the relation 
\begin{align*}
\left(P\right)_{2m} = 2^{2m} \left(\frac{P}{2}\right)_m\left(\frac{P+I}{2}\right)_m,
\end{align*}
equation (\ref{thm1b}) becomes
\begin{align*}
{}_3F_2\left(P,\frac{Q}{2},\frac{Q+I}{2};\frac{R}{2},\frac{R+I}{2};z\right) &= \sum_{m=0}^\infty \frac{\left(P\right)_m \left(Q\right)_{2m}}{\left(R\right)_{2m}} \frac{z^m}{m!} \\
&= \frac{\Gamma(R)}{\Gamma(Q)\Gamma(R-Q)}  \int_0^1 u^{Q-I} (1-u)^{R-Q-I} \sum_{m=0}^\infty \frac{\left(P\right)_m (zu^2)^m}{m!} \ du \\
&= \frac{\Gamma(R)}{\Gamma(Q)\Gamma(R-Q)} \int_0^1 u^{Q-I} (1-u)^{R-Q-I} (1-zu^2)^{-P} du. 
\end{align*}
This completes the proof of Theorem \ref{thm1}.
\end{proof}

\begin{corollary}
Under the conditions stated in Theorem \ref{thm1}, the following integral relation holds true:
\begin{align}
{}_3F_2\left(-kI,\frac{Q}{2},\frac{Q+I}{2};\frac{R}{2},\frac{R+I}{2};z\right) = \frac{\Gamma(R)}{\Gamma(Q)\Gamma(R-Q)} \int_0^1 u^{Q-I} (1-u)^{R-Q-I} (1-zu^2)^{kI} du. 
\end{align}
\end{corollary}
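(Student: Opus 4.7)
The strategy is to obtain the corollary by direct specialization of Theorem~\ref{thm1}, taking $P = -kI$. The first observation is that $-kI$ is a scalar matrix, so it commutes with every element of $\mathbf{C}^{R\times R}$; in particular it commutes with $Q$ and $R$. Hence the commutativity hypothesis of Theorem~\ref{thm1} is automatic, and the positive stability conditions on $Q$, $R$, and $R-Q$ are carried over verbatim from the theorem to the corollary.

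Second, I would substitute $P = -kI$ into the integral representation \eqref{thm1a}. The exponent $-P$ that appears in the integrand becomes $-(-kI) = kI$, so the factor $(1-zu^2)^{-P}$ turns into $(1-zu^2)^{kI}$. This is exactly the right-hand side claimed in the corollary, with the Beta-type normalizing constant $\Gamma(R)\Gamma(Q)^{-1}\Gamma(R-Q)^{-1}$ unchanged because it does not involve $P$.

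The only point that merits a brief comment is the convergence condition $|z|<1$ inherited from Theorem~\ref{thm1}. For generic $P$ this is needed both for the hypergeometric series and for the binomial expansion used in its proof. When $P = -kI$, however, the matrix Pochhammer symbol $(-kI)_m = (-k)(-k+1)\cdots(-k+m-1)\,I$ vanishes for $m \geq k+1$, so the ${}_3F_2$ series terminates and defines a polynomial in $z$. Consequently both sides of the asserted identity are polynomials in $z$, and the equality valid on $|z|<1$ by Theorem~\ref{thm1} extends to all $z\in\mathbf{C}$ by analytic continuation, though for the statement of the corollary it suffices to remain within $|z|<1$.

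Since the entire argument is a textbook substitution, there is no genuine technical obstacle; the only delicate point to flag is the termination of the series noted above, which explains why the restriction $P \in \mathbf{C}^{R\times R}$ may be relaxed to the non-positive-stable matrix $-kI$ without losing the representation.
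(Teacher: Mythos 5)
Your proposal is correct and matches the paper's approach: the corollary is stated as an immediate consequence of Theorem~\ref{thm1} obtained by the substitution $P=-kI$, which is exactly what you do. Your additional remark that the series terminates (for non-negative integer $k$) and hence the identity extends beyond $|z|<1$ is a sound bonus observation not present in the paper.
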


\begin{theorem}\label{thm2}
Let $P, Q$ and $R$ be commuting matrices in $\mathbf{C}^{R\times R}$ such that $R,R-P,R-Q$ and $R-Q-P$ are positive stable. Then the following integral representation holds true:
\begin{align}\label{thm2a}
{}_3F_2\left(P,\frac{Q}{2},\frac{Q+I}{2};\frac{R}{2},\frac{R+I}{2};1\right) = \frac{\Gamma(R)\Gamma(R-Q-P)}{\Gamma(R-P)\Gamma(R-Q)} \ {}_2F_1\left(P, Q; R-P;-1\right). 
\end{align}
\end{theorem}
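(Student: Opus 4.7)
The plan is to derive the identity by specializing the Euler-type integral of Theorem \ref{thm1} to $z=1$ and then algebraically re-packaging the integrand as the Euler representation of a ${}_2F_1$ matrix function evaluated at $-1$. Setting $z=1$ in (\ref{thm1a}) yields
\begin{align*}
{}_3F_2\left(P,\tfrac{Q}{2},\tfrac{Q+I}{2};\tfrac{R}{2},\tfrac{R+I}{2};1\right)
= \frac{\Gamma(R)}{\Gamma(Q)\,\Gamma(R-Q)}\int_0^1 u^{Q-I}(1-u)^{R-Q-I}(1-u^2)^{-P}\,du.
\end{align*}
The assumption that $R-Q-P$ is positive stable ensures the endpoint $u=1$ is integrable after the next step.

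Next, I would factorize $1-u^2=(1-u)(1+u)$ and use the matrix functional calculus to write $(1-u^2)^{-P}=(1-u)^{-P}(1+u)^{-P}$; this is legitimate because $1-u$ and $1+u$ are commuting scalars and $f(z)=z^{-P}$ is holomorphic on a suitable neighbourhood, so the multiplicative property (1.2) applies. Absorbing $(1-u)^{-P}$ into $(1-u)^{R-Q-I}$, which is valid because $P$ commutes with $R$ and $Q$, the integrand becomes $u^{Q-I}(1-u)^{R-Q-P-I}(1+u)^{-P}=u^{Q-I}(1-u)^{R-Q-P-I}(1-(-1)u)^{-P}$.

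I would then recognize the resulting integral as the matrix Euler representation of ${}_2F_1(P,Q;R-P;-1)$: under the commuting/positive-stability hypotheses (with $Q$ and $R-P-Q$ positive stable, both of which are present in the hypotheses), one has
\begin{align*}
{}_2F_1(P,Q;R-P;-1) = \frac{\Gamma(R-P)}{\Gamma(Q)\,\Gamma(R-P-Q)}\int_0^1 u^{Q-I}(1-u)^{R-P-Q-I}(1+u)^{-P}\,du,
\end{align*}
so that the integral above equals $\Gamma(Q)\,\Gamma(R-Q-P)\,\Gamma^{-1}(R-P)\cdot{}_2F_1(P,Q;R-P;-1)$. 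Multiplying by the prefactor $\Gamma(R)\,\Gamma^{-1}(Q)\,\Gamma^{-1}(R-Q)$ and cancelling $\Gamma(Q)$ yields the right-hand side of (\ref{thm2a}).

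The main subtleties are technical rather than conceptual: justifying the split $(1-u^2)^{-P}=(1-u)^{-P}(1+u)^{-P}$ via the commutativity relation (1.2) and the holomorphic functional calculus; and checking that the exponent $R-Q-P-I$ makes the integral converge at $u=1$, which follows from positive stability of $R-Q-P$. Once these are in place, the computation is essentially a one-line algebraic manipulation combined with an appeal to the known Euler integral representation for the ${}_2F_1$ matrix function from J\'odar-Cort\'es.
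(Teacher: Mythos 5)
Your proposal is correct and follows the same opening moves as the paper: substitute $z=1$ into (\ref{thm1a}), factor $1-u^2=(1-u)(1+u)$, and absorb $(1-u)^{-P}$ into $(1-u)^{R-Q-I}$ to get the integrand $u^{Q-I}(1-u)^{R-Q-P-I}(1+u)^{-P}$. The only divergence is in the final step: where you identify the resulting integral outright as the J\'odar--Cort\'es Euler representation of ${}_2F_1(P,Q;R-P;-1)$, the paper instead expands $(1+u)^{-P}=\sum_m\binom{-P}{m}u^m$, integrates term by term via the Beta matrix function $B(Q+mI,\,R-Q-P)=\Gamma(Q+mI)\Gamma(R-Q-P)\Gamma^{-1}(R-P+mI)$, and reassembles the Pochhammer symbols into the series for ${}_2F_1(P,Q;R-P;-1)$ --- in effect re-deriving at $z=-1$ the very Euler integral you cite. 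Your route is shorter and cleaner when the cited representation is available under the stated commutativity and positive-stability hypotheses (as it is here); the paper's route is self-contained and makes the convergence bookkeeping explicit. Both arguments share the same unaddressed technicalities, namely passing from $|z|<1$ to $z=1$ in Theorem \ref{thm1} and the fact that positive stability of $Q$ (needed at the endpoint $u=0$ and for Theorem \ref{thm1} itself) is not listed among the hypotheses of Theorem \ref{thm2}, so neither issue counts against you.
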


\begin{proof}
Substituting $z=1$ in (\ref{thm1a}), it becomes
\begin{align*}
{}_3F_2\left(P,\frac{Q}{2},\frac{Q+I}{2};\frac{R}{2},\frac{R+I}{2};1\right) &= \frac{\Gamma(R)}{\Gamma(Q)\Gamma(R-Q)} \int_0^1 u^{Q-I} (1-u)^{R-Q-P-I} (1+u)^{-P} du \\
&= \frac{\Gamma(R)}{\Gamma(Q)\Gamma(R-Q)} \sum_{m=0}^\infty {-P \choose m} \int_0^1 u^{Q+(m-1)I} (1-u)^{R-Q-P-I} \ du \\
&= \frac{\Gamma(R)}{\Gamma(Q)\Gamma(R-Q)} \sum_{m=0}^\infty {-P \choose m} \frac{\Gamma(Q+mI)\Gamma(R-Q-P)}{\Gamma(R-P+mI)} \\
&= \frac{\Gamma(R)\Gamma(R-Q-P)}{\Gamma(R-P)\Gamma(R-Q)} \sum_{m=0}^\infty \frac{(P)_m (Q)_m}{(R-P)_m} \frac{(-1)^m}{m!} \\
&= \frac{\Gamma(R)\Gamma(R-Q-P)}{\Gamma(R-P)\Gamma(R-Q)} \ {}_2F_1\left(P, Q; R-P;-1\right),
\end{align*}
which completes the proof of Theorem \ref{thm2}.
\end{proof}

\begin{corollary}\label{cor1}
Under the conditions stated in Theorem \ref{thm2}, the following integral relation holds true:
\begin{align*}
{}_3F_2\left(-nI,\frac{Q}{2},\frac{Q+I}{2};\frac{R}{2},\frac{R+I}{2};1\right) = \frac{\left(R-Q\right)_n}{\left(R\right)_n} \ {}_2F_1\left(-nI, Q; R+nI;-1\right). 
\end{align*}
\end{corollary}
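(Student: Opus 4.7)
The plan is to derive this corollary as a direct specialization of Theorem \ref{thm2}, taking $P=-nI$.

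First, I would verify that the hypotheses of Theorem \ref{thm2} remain valid for this choice of $P$. Since $-nI$ is a scalar multiple of the identity, it commutes with the given matrices $Q$ and $R$ automatically. Among the four matrices required to be positive stable, namely $R$, $R-P=R+nI$, $R-Q$, and $R-Q-P=R-Q+nI$, the first and third are hypotheses of the corollary; the second and fourth follow from these since adding $nI$ (with $n\geq 0$) preserves positive stability of the spectrum.

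Next, I would substitute $P=-nI$ directly into the identity (\ref{thm2a}) to obtain
\begin{align*}
{}_3F_2\!\left(-nI,\frac{Q}{2},\frac{Q+I}{2};\frac{R}{2},\frac{R+I}{2};1\right) = \frac{\Gamma(R)\,\Gamma(R-Q+nI)}{\Gamma(R+nI)\,\Gamma(R-Q)}\ {}_2F_1\!\left(-nI,Q;R+nI;-1\right).
\end{align*}
The remaining task is to recognize the gamma prefactor as $(R-Q)_n (R)_n^{-1}$. Using the identity (\ref{PCH2}) in the form $\Gamma(A+nI)=\Gamma(A)(A)_n$, applied with $A=R-Q$ and with $A=R$, I can replace $\Gamma(R-Q+nI)$ by $\Gamma(R-Q)(R-Q)_n$ and $\Gamma(R+nI)$ by $\Gamma(R)(R)_n$. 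The factors $\Gamma(R)$, $\Gamma(R-Q)$ cancel, and rearranging the surviving matrix Pochhammer symbols leaves exactly $(R-Q)_n (R)_n^{-1}$.

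The main obstacle is essentially bookkeeping: one must check that the various matrix factors may be reordered freely. This is where I would invoke the matrix functional calculus identity (1.2) stated in the preliminaries: since $R$ and $Q$ commute, so do $R$ and $R-Q$; consequently $\Gamma(R)$, $(R)_n$, $\Gamma(R-Q)$, and $(R-Q)_n$ all pairwise commute (each being a holomorphic function of $R$ or of $R-Q$), which legitimizes the cancellation and gives the stated form.
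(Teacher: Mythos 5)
Your proposal is correct and is exactly the intended derivation: the paper states this corollary without proof as an immediate specialization of Theorem \ref{thm2} at $P=-nI$, and your substitution together with the reduction $\Gamma(R-Q+nI)=\Gamma(R-Q)(R-Q)_n$ and $\Gamma(R+nI)=\Gamma(R)(R)_n$ via (\ref{PCH2}) yields the stated coefficient $(R-Q)_n(R)_n^{-1}$. Your checks that $-nI$ commutes with everything and that positive stability of $R+nI$ and $R-Q+nI$ follows from that of $R$ and $R-Q$ are also the right points to verify.
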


\begin{theorem}\label{thm3}
Let $P, Q$ and $R$ be commuting matrices in $\mathbf{C}^{R\times R}$ such that $Q,R$ and $R-Q$ are positive stable. Then the following integral representation holds true:
\begin{align}\label{thm3a}
{}_3F_2\left(P,\frac{Q}{2},\frac{Q+I}{2};\frac{R}{2},\frac{R+I}{2};\frac{1}{2}\right) = 2^P \sum_{m=0}^\infty {-P \choose m} \frac{(R-Q)_m}{(R)_m} \times {}_2F_1\left(-mI,Q;R+mI;-1\right). 
\end{align}
\end{theorem}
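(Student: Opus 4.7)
The plan is to specialize Theorem~\ref{thm1} at $z=1/2$ and then rewrite the integrand so an inner $_2F_1$-integral emerges; recombining gamma quotients into Pochhammer symbols will then produce the target identity. Setting $z=1/2$ in (\ref{thm1a}) gives
\[
{}_3F_2\!\left(P,\tfrac{Q}{2},\tfrac{Q+I}{2};\tfrac{R}{2},\tfrac{R+I}{2};\tfrac{1}{2}\right) = \frac{\Gamma(R)}{\Gamma(Q)\Gamma(R-Q)}\int_0^1 u^{Q-I}(1-u)^{R-Q-I}\bigl(1-\tfrac{u^2}{2}\bigr)^{-P}\,du.
\]
The key algebraic observation is $1-\tfrac{u^2}{2} = \tfrac{1}{2}\bigl(1+(1-u)(1+u)\bigr)$, whence $\bigl(1-u^2/2\bigr)^{-P} = 2^P\bigl(1+(1-u)(1+u)\bigr)^{-P}$. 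This rewriting, rather than a naive expansion in $u^2/2$, is what manufactures the $(R-Q)_m$ Pochhammer factor and the hypergeometric function evaluated at $-1$.

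Next, I expand with the matrix binomial series and interchange sum with integral to obtain
\[
\frac{\Gamma(R)}{\Gamma(Q)\Gamma(R-Q)}\,2^P\sum_{m=0}^\infty \binom{-P}{m}\int_0^1 u^{Q-I}(1-u)^{R+mI-Q-I}(1+u)^m\,du.
\]
Because $mI$ is a scalar multiple of the identity, $(1+u)^m$ agrees with $(1-(-1)u)^{-(-mI)}$, so the inner integral is precisely the Euler-type representation of ${}_2F_1(-mI,Q;R+mI;-1)$ with parameters $A=-mI$, $B=Q$, $C=R+mI$, $z=-1$; it equals $\frac{\Gamma(Q)\Gamma(R+mI-Q)}{\Gamma(R+mI)}\,{}_2F_1(-mI,Q;R+mI;-1)$. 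Plugging this back in, cancelling $\Gamma(Q)$, and condensing the remaining gamma ratios via (\ref{PCH2}) as $\Gamma(R+mI-Q)/\Gamma(R-Q) = (R-Q)_m$ and $\Gamma(R)/\Gamma(R+mI) = (R)_m^{-1}$ produces exactly (\ref{thm3a}).

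The main obstacle I anticipate is rigorously justifying the term-by-term integration. The binomial series converges absolutely on $u\in(0,1)$ since $(1-u)(1+u) = 1-u^2 \in [0,1)$ there, while the endpoint singularities of the beta kernel are integrable under the positive-stability hypotheses on $Q$ and $R-Q$; one then invokes a matrix form of dominated convergence, controlling the binomial coefficients $\binom{-P}{m}$ by the norm bounds on $t^P$ recalled at the start of Section~1. Every other step reduces to a scalar manipulation that is rendered legitimate by the assumed commutativity of $P$, $Q$ and $R$ together with the matrix functional calculus identities already established in the preliminaries.
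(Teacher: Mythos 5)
Your proposal is correct and follows essentially the same route as the paper: substitute $z=\tfrac12$ into Theorem~\ref{thm1}, factor out $2^P$ via $1-\tfrac{u^2}{2}=\tfrac12\bigl(1+(1-u)(1+u)\bigr)$, and expand binomially in $-P$. The only (harmless) divergence is at the end, where you identify the inner integral directly as the Euler integral of ${}_2F_1(-mI,Q;R+mI;-1)$, whereas the paper expands $(1+u)^m$ binomially, integrates via the beta matrix function, and re-sums the finite $k$-sum using $(R)_{k+m}=(R)_m(R+m)_k$ --- two bookkeepings of the same computation, and you are in fact more attentive than the paper to justifying the interchange of sum and integral.
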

\begin{proof}
Substituting $z=\frac{1}{2}$ in Theorem \ref{thm1}, we obtain
\begin{align*}
&{}_3F_2\left(P,\frac{Q}{2},\frac{Q+I}{2};\frac{R}{2},\frac{R+I}{2};\frac{1}{2}\right) \\ & \quad = \frac{\Gamma(R)}{\Gamma(Q)\Gamma(R-Q)} \int_0^1 u^{Q-I} (1-u)^{R-Q-I} \left(1-\frac{1}{2} u^2\right)^{-P} du\\
& \quad =  \frac{2^P \ \Gamma(R)}{\Gamma(Q)\Gamma(R-Q)} \int_0^1 u^{Q-I} (1-u)^{R-Q-I} \left(2-u^2\right)^{-P} du \\
& \quad =  \frac{2^P \ \Gamma(R)}{\Gamma(Q)\Gamma(R-Q)} \sum_{m=0}^\infty {-P \choose m} \int_0^1 u^{Q-I} (1-u)^{R-Q+(m-1)I} \left(1+u\right)^{m} du \\
& \quad =  \frac{2^P \ \Gamma(R)}{\Gamma(Q)\Gamma(R-Q)} \sum_{m=0}^\infty \sum_{k=0}^m {-P \choose m} {m \choose k} \int_0^1 u^{Q+(k-1)I} (1-u)^{R-Q+(m-1)I} du \\
& \quad = 2^P \sum_{m=0}^\infty \sum_{k=0}^m {-P \choose m} {m \choose k} \frac{(Q)_k(R-Q)_m}{(R)_{k+m}}.
\end{align*}
Using the transformation $(R)_{k+m} = (R)_m (R+m)_k$, we arrive at
\begin{align*}
&{}_3F_2\left(P,\frac{Q}{2},\frac{Q+I}{2};\frac{R}{2},\frac{R+I}{2};\frac{1}{2}\right) \\ & \quad = 2^P \sum_{m=0}^\infty {-P \choose m} \frac{(R-Q)_m}{(R)_m} \sum_{k=0}^m {m \choose k} \frac{(Q)_k}{(R+m)_k}\\
& \quad = 2^P \sum_{m=0}^\infty {-P \choose m} \frac{(R-Q)_m}{(R)_m} \times {}_2F_1\left(-mI,Q;R+mI;-1\right).
\end{align*}
This completes the proof of Theorem \ref{thm3}.
\end{proof}
\begin{corollary}\label{cor2}
Under the conditions stated in Theorem \ref{thm3}, the following integral relation holds true:
\begin{align}
{}_3F_2\left(P,\frac{Q}{2},\frac{Q+I}{2};\frac{R}{2},\frac{R+I}{2};\frac{1}{2}\right) = 2^P \sum_{m=0}^\infty {-P \choose m} \ {}_3F_2\left(-mI,\frac{Q}{2},\frac{Q+I}{2};\frac{R}{2},\frac{R+I}{2};1\right)
\end{align}
\end{corollary}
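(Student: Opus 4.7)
The plan is to combine the series representation produced by Theorem \ref{thm3} with the identity provided by Corollary \ref{cor1}. Theorem \ref{thm3} already rewrites the left-hand side as
\[
2^P \sum_{m=0}^\infty \binom{-P}{m}\frac{(R-Q)_m}{(R)_m}\;{}_2F_1\!\left(-mI,Q;R+mI;-1\right),
\]
so the only thing left is to recognize the quantity $\tfrac{(R-Q)_m}{(R)_m}\,{}_2F_1(-mI,Q;R+mI;-1)$ as a specific ${}_3F_2$.

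First, I would invoke Corollary \ref{cor1} with $n$ replaced by $m$. Under the hypotheses of Theorem \ref{thm3} (which include the positive stability of $Q$, $R$, and $R-Q$ and the commutativity of $P,Q,R$), the matrices $R$, $R-Q$, $R+mI$ are positive stable for each $m \geq 0$, and $-mI$ commutes with everything, so the hypotheses of Corollary \ref{cor1} are satisfied term-by-term. Corollary \ref{cor1} gives
\[
\frac{(R-Q)_m}{(R)_m}\;{}_2F_1\!\left(-mI,Q;R+mI;-1\right)
=\;{}_3F_2\!\left(-mI,\tfrac{Q}{2},\tfrac{Q+I}{2};\tfrac{R}{2},\tfrac{R+I}{2};1\right).
\]

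Next, I would substitute this identity into the expansion obtained from Theorem \ref{thm3}. Because the factor $\binom{-P}{m}$ is scalar (recall $\binom{-P}{m}=\tfrac{(-1)^m (P)_m}{m!}$ in the matrix sense, and this commutes with the remaining matrix coefficients by the functional calculus), the substitution can be performed inside the summation without any reordering issues, and the desired identity follows immediately.

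The only conceptual obstacle is making sure the substitution is justified: one must check that the hypotheses of Corollary \ref{cor1} apply uniformly in $m$, and that the termwise manipulation is legitimate (which reduces to the absolute convergence already established in the proof of Theorem \ref{thm3}). After that, the result is essentially a one-line consequence; no further integral representation or rearrangement is needed.
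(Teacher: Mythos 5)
Your proposal is correct and is exactly the derivation the paper intends: the right-hand side of Theorem \ref{thm3} is rewritten term-by-term via Corollary \ref{cor1} (with $P=-mI$, under which the extra positivity hypotheses of Theorem \ref{thm2} are automatic), yielding Corollary \ref{cor2} immediately. Your checks on the hypotheses and on commutativity of the matrix coefficient ${-P \choose m}$ (which is a matrix, not a scalar, but commutes with the rest since $P,Q,R$ commute) are the right points to verify.
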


Now we generalize the Theorem \ref{thm1} in the following form using the suitable adjustment of argument in the ${}_3F_2(\cdot)$ matrix function.
\begin{theorem}\label{thm6}
Let $P, Q$ and $R$ be commuting matrices in $\mathbf{C}^{R\times R}$ such that $Q,R$ and $R-Q$ are positive stable and $w\in\mathbf{R} \backslash \left\lbrace 0,-1\right\rbrace$. Then the following integral representation holds true:
\begin{align}\nonumber
& {}_3F_2\left(P,\frac{Q}{2},\frac{Q+I}{2};\frac{R}{2},\frac{R+I}{2};\frac{1}{w+1}\right) \\ & \qquad\qquad\qquad = \left(\frac{w+1}{w}\right)^P \sum_{m=0}^\infty {-P \choose m} w^{-m} {}_3F_2\left(-mI,\frac{Q}{2},\frac{Q+I}{2};\frac{R}{2},\frac{R+I}{2};1\right). \label{thm6a}
\end{align}
\end{theorem}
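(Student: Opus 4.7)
The plan is to proceed in direct analogy with the $z=\tfrac12$ case handled in Theorem \ref{thm3}, using the integral representation from Theorem \ref{thm1} as the starting point and then recognizing the inner integrals that appear as instances of the $z=1$ formula established in the proof of Theorem \ref{thm2}.

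First I would substitute $z=\tfrac{1}{w+1}$ into \eqref{thm1a}, obtaining
\[
{}_3F_2\left(P,\tfrac{Q}{2},\tfrac{Q+I}{2};\tfrac{R}{2},\tfrac{R+I}{2};\tfrac{1}{w+1}\right) = \frac{\Gamma(R)}{\Gamma(Q)\Gamma(R-Q)}\int_0^1 u^{Q-I}(1-u)^{R-Q-I}\Bigl(1-\tfrac{u^2}{w+1}\Bigr)^{-P}du.
\]
The key algebraic move is the factorization $1-\tfrac{u^2}{w+1}=\tfrac{w+1-u^2}{w+1}=\tfrac{w}{w+1}\bigl(1+\tfrac{1-u^2}{w}\bigr)$, which (since $P$ commutes with itself and the base is a positive scalar for the relevant values of $w$) yields
\[
\Bigl(1-\tfrac{u^2}{w+1}\Bigr)^{-P}=\Bigl(\tfrac{w+1}{w}\Bigr)^{P}\Bigl(1+\tfrac{1-u^2}{w}\Bigr)^{-P}.
\]
Expanding the second factor by the matrix binomial series gives $\sum_{m=0}^\infty\binom{-P}{m}w^{-m}(1-u^2)^m$, and using $(1-u^2)^m=(1-u)^m(1+u)^m$ pulls the $u$-dependence back into the Beta-type shape of the integrand.

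Next I would interchange sum and integral to obtain
\[
\Bigl(\tfrac{w+1}{w}\Bigr)^{P}\sum_{m=0}^\infty\binom{-P}{m}w^{-m}\cdot\frac{\Gamma(R)}{\Gamma(Q)\Gamma(R-Q)}\int_0^1 u^{Q-I}(1-u)^{R-Q+(m-1)I}(1+u)^{m}du,
\]
and then recognize the inner integral: the computation inside the proof of Theorem \ref{thm2} shows that
\[
\frac{\Gamma(R)}{\Gamma(Q)\Gamma(R-Q)}\int_0^1 u^{Q-I}(1-u)^{R-Q+(m-1)I}(1+u)^{m}du = {}_3F_2\left(-mI,\tfrac{Q}{2},\tfrac{Q+I}{2};\tfrac{R}{2},\tfrac{R+I}{2};1\right),
\]
which is exactly the formula of Theorem \ref{thm2} with $P$ replaced by the scalar matrix $-mI$. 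Substituting this identification into the displayed sum yields \eqref{thm6a}, and as a sanity check $w=1$ recovers Theorem \ref{thm3}.

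The main obstacle I expect is the analytic justification of the binomial expansion and the termwise integration: we need $\bigl|\tfrac{1-u^2}{w}\bigr|<1$ uniformly on $[0,1]$, which holds for $|w|>1$ but fails on part of the hypothesis range $w\in\mathbf{R}\setminus\{0,-1\}$. I would handle this by first establishing the identity on $|w|>1$ by uniform-convergence arguments on compact subsets of $[0,1]$ (using the operator-norm bound for $\|u^P\|$ recalled in the preliminaries to control tails), and then extending to the remaining values of $w$ by analyticity in $w$ of both sides away from the singular points $w=0,-1$. The commutativity of $P$, $Q$, $R$ ensures that all Pochhammer symbols, gamma factors, and scalar powers may be manipulated as in the scalar case without ordering issues.
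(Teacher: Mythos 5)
Your proposal follows essentially the same route as the paper: substitute $z=\tfrac{1}{w+1}$, factor out $\left(\tfrac{w+1}{w}\right)^P$, expand $\left(1+\tfrac{1-u^2}{w}\right)^{-P}$ binomially, and identify the resulting Beta-type integrals as the unit-argument values ${}_3F_2\left(-mI,\tfrac{Q}{2},\tfrac{Q+I}{2};\tfrac{R}{2},\tfrac{R+I}{2};1\right)$ --- the paper reaches the same identification by further expanding $(1+u)^m$, evaluating the Beta integrals, recognizing a ${}_2F_1(\cdot;-1)$, and invoking Corollary \ref{cor1}, which is equivalent to your direct recognition of the integral from the proof of Theorem \ref{thm2} with $P\mapsto -mI$. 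Your observation that the termwise binomial expansion only converges uniformly for $|w|>1$ and must be extended to the rest of $\mathbf{R}\setminus\{0,-1\}$ by analyticity in $w$ is a legitimate point that the paper's proof passes over in silence.
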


\begin{proof}
Substituting $z = \frac{1}{w+1}$ in Theorem \ref{thm1}, we have
\begin{align*}
& {}_3F_2\left(P,\frac{Q}{2},\frac{Q+I}{2};\frac{R}{2},\frac{R+I}{2};\frac{1}{w+1}\right) \\ &\quad = \frac{\Gamma(R)}{\Gamma(Q)\Gamma(R-Q)} \int_0^1 u^{Q-I} (1-u)^{R-Q-I} \left(1-\frac{u^2}{w+1}\right)^{-P} du \\
&\quad = \frac{(w+1)^P \ \Gamma(R)}{\Gamma(Q)\Gamma(R-Q)} \int_0^1 u^{Q-I} (1-u)^{R-Q-I} \left(w+1-u^2\right)^{-P} du \\
&\quad = \left(\frac{w+1}{w}\right)^P \frac{\Gamma(R)}{\Gamma(Q)\Gamma(R-Q)} \int_0^1 u^{Q-I} (1-u)^{R-Q-I} \left(1+\frac{1-u^2}{w}\right)^{-P} du \\
&\quad = \left(\frac{w+1}{w}\right)^P \frac{\Gamma(R)}{\Gamma(Q)\Gamma(R-Q)} \int_0^1 u^{Q-I} (1-u)^{R-Q-I} \sum_{m=0}^\infty {-P \choose m} \left(\frac{1-u^2}{w}\right)^m du \\
&\quad = \left(\frac{w+1}{w}\right)^P \frac{\Gamma(R)}{\Gamma(Q)\Gamma(R-Q)} \sum_{m=0}^\infty {-P \choose m} w^{-m}\int_0^1 u^{Q-I} (1-u)^{R-Q+(m-1)I}  \left(1+u\right)^m du \\
&\quad = \left(\frac{w+1}{w}\right)^P \frac{\Gamma(R)}{\Gamma(Q)\Gamma(R-Q)} \sum_{m=0}^\infty \sum_{k=0}^m {-P \choose m} {m \choose k} w^{-m}\int_0^1 u^{Q+(k-1)I} (1-u)^{R-Q+(m-1)I} du \\
&\quad = \left(\frac{w+1}{w}\right)^P \sum_{m=0}^\infty \sum_{k=0}^m {-P \choose m} {m \choose k} w^{-m} \frac{(Q)_k (R-Q)_m}{(R)_m (R+m)_k}\\
&\quad = \left(\frac{w+1}{w}\right)^P \sum_{m=0}^\infty {-P \choose m} w^{-m} \frac{(R-Q)_m}{(R)_m} \sum_{k=0}^m \frac{(-1)^k (-m)_k}{k!} \frac{(Q)_k}{(R+m)_k}  \\
&\quad = \left(\frac{w+1}{w}\right)^P \sum_{m=0}^\infty {-P \choose m} w^{-m} \frac{(R-Q)_m}{(R)_m} {}_2F_1\left(-mI,Q;R+mI;-1 \right).
\end{align*}
Using Corollary \ref{cor1}, this yields the right hand side of Theorem \ref{thm6}.
\end{proof}

\begin{remark}
It is interesting to observe that for $w=1$, Theorem \ref{thm6} reduces to Theorem \ref{thm3} and Corollary \ref{cor2} and for $w=-2$, it reduces to the following result asserted by Corollary \ref{cor3}.
\end{remark}

\begin{corollary}\label{cor3}
Under the conditions stated in Theorem \ref{thm6}, the following integral relation holds true:
\begin{align*}
& {}_3F_2\left(P,\frac{Q}{2},\frac{Q+I}{2};\frac{R}{2},\frac{R+I}{2};-1\right) = \left(\frac{1}{2}\right)^P \sum_{m=0}^\infty {-P \choose m} (-2)^{-m} {}_3F_2\left(-mI,\frac{Q}{2},\frac{Q+I}{2};\frac{R}{2},\frac{R+I}{2};1\right).
\end{align*}
\end{corollary}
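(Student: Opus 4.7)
The plan is to obtain Corollary \ref{cor3} as a one-line specialization of Theorem \ref{thm6}, exactly as flagged in the preceding Remark. Since the hypotheses on $P$, $Q$, $R$ (commutativity together with positive stability of $Q$, $R$, and $R-Q$) are identical in the two statements, no new condition needs to be verified on the matrix side, and the only admissibility check is that $w=-2$ lies in $\mathbf{R}\setminus\{0,-1\}$, which is immediate.

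With $w=-2$ fixed I would then record the three scalar identities that drive the substitution into formula (\ref{thm6a}): the $z$-argument becomes $\frac{1}{w+1}=\frac{1}{-1}=-1$, reproducing the left-hand side of the Corollary; the external prefactor becomes $\left(\frac{w+1}{w}\right)^{P}=\left(\frac{-1}{-2}\right)^{P}=\left(\frac{1}{2}\right)^{P}$, which is unambiguous since $\tfrac{1}{2}>0$ makes the matrix power $\exp\!\bigl(P\ln\tfrac{1}{2}\bigr)$ independent of any branch choice; and the summation weight becomes $w^{-m}=(-2)^{-m}$. Plugging these three values termwise into (\ref{thm6a}) reproduces the identity of the Corollary without any further manipulation.

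I do not anticipate a genuine obstacle here, because the argument is a pure scalar substitution into an already-proved formula. The one subtlety worth acknowledging is convergence of the series on the right, which is mild. The factor $(-2)^{-m}$ supplies geometric decay, while each inner ${}_3F_2\bigl(-mI,\tfrac{Q}{2},\tfrac{Q+I}{2};\tfrac{R}{2},\tfrac{R+I}{2};1\bigr)$ is in fact a \emph{terminating} matrix hypergeometric series, since $(-mI)_k$ annihilates all terms with $k\geq m+1$; hence this inner quantity grows at most polynomially in $m$ in operator norm, and the outer series therefore converges absolutely in $\mathbf{C}^{R\times R}$. With that remark the identity can be asserted in the same sense as in Theorem \ref{thm6}, and the proof is complete.
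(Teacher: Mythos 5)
Your proposal is correct and matches the paper's own treatment exactly: the paper derives Corollary \ref{cor3} precisely by setting $w=-2$ in Theorem \ref{thm6}, as announced in the preceding Remark. Your added observations on the well-definedness of $\left(\tfrac{1}{2}\right)^{P}$ and on convergence of the outer series (via the terminating inner ${}_3F_2$) go slightly beyond what the paper records, but they do not change the route.
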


Next we generalize the Theorem \ref{thm1} in the obvious way using the suitable adjustment of matrix parameters by introducing the sequence of $q$ parameters in the ${}_3F_2(\cdot)$ matrix function.

\begin{theorem}\label{thm4}
Let $P, Q$ and $R$ be commuting matrices in $\mathbf{C}^{R\times R}$ such that $Q,R$ and $R-Q$ are positive stable. Then, for $|z|<1$, the following integral representation holds true:
\begin{align}\nonumber
&{}_{q+1}F_q\left(P,\frac{Q}{q},\frac{Q+I}{q},\dots,\frac{Q+(q-1)I}{q};\frac{R}{q},\frac{R+I}{q},\dots,\frac{R+(q-1)I}{q};z\right) \\ & \qquad\qquad\qquad\qquad\quad = \frac{\Gamma(R)}{\Gamma(Q)\Gamma(R-Q)} \int_0^1 u^{Q-I} (1-u)^{R-Q-I} (1-zu^q)^{-P} du. \label{thm4a}
\end{align}
\end{theorem}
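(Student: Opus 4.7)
The plan is to mirror the proof of Theorem \ref{thm1}, replacing the duplication-type formula used there by its $q$-fold generalisation. The pivotal identity is the matrix Gauss multiplication formula for the Pochhammer symbol,
\[
(P)_{qm} \;=\; q^{qm}\,\left(\frac{P}{q}\right)_m\left(\frac{P+I}{q}\right)_m\cdots\left(\frac{P+(q-1)I}{q}\right)_m,
\]
which follows from (\ref{PCH1}) by regrouping the $qm$ consecutive factors $P,P+I,\dots,P+(qm-1)I$ into $q$ arithmetic progressions of length $m$ and common difference $qI$, and which specialises to the identity used in Theorem \ref{thm1} when $q=2$.

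Applying this identity once with $P$ replaced by $Q$ and once with $P$ replaced by $R$, the factors of $q^{qm}$ cancel between numerator and denominator of the series (\ref{sahai1}) that defines the left-hand side of (\ref{thm4a}), and the $q$-fold products of shifted Pochhammer symbols in the numerator and denominator collapse into the single factors $(Q)_{qm}$ and $(R)_{qm}^{-1}$, respectively. The commutativity of $P,Q,R$ is used here so that all matrix factors commute and may be reordered freely. The left-hand side thus reduces to
\[
\sum_{m=0}^{\infty} (P)_m\,(Q)_{qm}\,(R)_{qm}^{-1}\,\frac{z^m}{m!}.
\]
Using (\ref{PCH2}) together with the positive stability of $Q$, $R$ and $R-Q$, the ratio $(Q)_{qm}(R)_{qm}^{-1}$ is recognised as a Beta matrix integral,
\[
(Q)_{qm}\,(R)_{qm}^{-1} \;=\; \frac{\Gamma(R)}{\Gamma(Q)\Gamma(R-Q)}\int_0^1 u^{Q+(qm-1)I}(1-u)^{R-Q-I}\,du.
\]
Substituting this back and interchanging summation with integration, the resulting inner series $\sum_{m\ge 0}(P)_m(zu^q)^m/m!$ is exactly the binomial-type expansion of $(1-zu^q)^{-P}$, which delivers (\ref{thm4a}).

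The main technical point to check is the interchange of sum and integral. For $|z|<1$ and $u\in[0,1]$ one has $|zu^q|\le|z|<1$, and the matrix hypergeometric series $\sum_{m\ge 0}(P)_m (zu^q)^m/m!$ converges absolutely and uniformly in $u\in[0,1]$ under this condition, as follows from the norm estimate on $t^P$ recalled in the preliminaries. Combined with the integrability of $u^{Q-I}(1-u)^{R-Q-I}$ on $[0,1]$ (ensured by the positive stability of $Q$ and $R-Q$), this legitimises Fubini's theorem. The remaining steps -- the multiplication formula for $(P)_{qm}$ and the ensuing cancellations -- are routine once commutativity is used to move matrix factors past one another.
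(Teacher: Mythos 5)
Your proposal is correct and follows essentially the same route as the paper: the authors likewise invoke the multiplication formula $(P)_{mn} = m^{mn}\left(\frac{P}{m}\right)_n\cdots\left(\frac{P+(m-1)I}{m}\right)_n$ and then repeat the argument of Theorem \ref{thm1} (collapse the series to $\sum_m (P)_m (Q)_{qm}(R)_{qm}^{-1}z^m/m!$, insert the Beta matrix integral, interchange sum and integral, and resum to $(1-zu^q)^{-P}$). Your write-up is in fact more complete than the paper's, which omits both the details of the collapse and any justification of the interchange of summation and integration.
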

\begin{proof}
Using the following relation:
\begin{align*}
\left(P\right)_{mn} = m^{mn} \left(\frac{P}{m}\right)_n \left(\frac{P+I}{m}\right)_n \dots \left(\frac{P+(m-1)I}{m}\right)_n,
\end{align*}
we can easily proceed for the proof similar to Theorem \ref{thm1}. 
\end{proof}
Now for $q=3$ and $z=1$, Theorem \ref{thm4} leads to the following result asserted by Theorem \ref{thm5}.
\begin{theorem}\label{thm5}
Let $P, Q$ and $R$ be commuting matrices in $\mathbf{C}^{R\times R}$ such that $R,R-P,R-Q$ and $R-Q-P$ are positive stable. Then the following integral representation holds true:
\begin{align}\nonumber
& {}_4F_3\left(P,\frac{Q}{3},\frac{Q+I}{3},\frac{Q+2I}{3};\frac{R}{3},\frac{R+I}{3},\frac{R+2I}{3};1\right) \\ & \qquad = \frac{\Gamma(R)\Gamma(R-Q-P)}{\Gamma(R-P)\Gamma(R-Q)} \sum_{m=0}^\infty \frac{(-1)^m (P)_m (Q)_m}{m! (R-P)_m} \ {}_2F_1\left(-mI,Q+mI;R-P+mI;-1\right). \label{thm5a}
\end{align}
\end{theorem}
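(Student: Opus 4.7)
My plan is to specialize Theorem \ref{thm4} at $q=3$ and $z=1$, which yields the integral representation
\begin{align*}
{}_4F_3\left(P,\frac{Q}{3},\frac{Q+I}{3},\frac{Q+2I}{3};\frac{R}{3},\frac{R+I}{3},\frac{R+2I}{3};1\right) = \frac{\Gamma(R)}{\Gamma(Q)\Gamma(R-Q)} \int_0^1 u^{Q-I}(1-u)^{R-Q-I}(1-u^3)^{-P}\, du,
\end{align*}
and then transform the kernel via the factorization $1-u^3 = (1-u)(1+u+u^2)$. Because both scalar factors are positive on $[0,1)$ and $P$ commutes with them, the matrix functional calculus gives $(1-u^3)^{-P} = (1-u)^{-P}(1+u+u^2)^{-P}$, and the $(1-u)^{-P}$ factor combines with the existing weight to produce the exponent $R-Q-P-I$, which is where the positive-stability hypothesis on $R-Q-P$ will be used for integrability at $u=1$.

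Next I would treat the remaining factor $(1+u+u^2)^{-P}$ by writing $1+u+u^2 = 1+u(1+u)$ and invoking the matrix binomial series
\begin{align*}
(1+u+u^2)^{-P} = \sum_{m=0}^\infty \binom{-P}{m} u^m (1+u)^m,
\end{align*}
followed by the finite scalar expansion $(1+u)^m = \sum_{k=0}^m \binom{m}{k} u^k$. After interchanging the summations with the integral (justified by the dominated convergence coming from the positive-stable exponents), the resulting integrals are matrix Beta integrals of the form
\begin{align*}
\int_0^1 u^{Q+(m+k-1)I}(1-u)^{R-Q-P-I}\, du = \Gamma(Q+(m+k)I)\,\Gamma(R-Q-P)\,[\Gamma(R-P+(m+k)I)]^{-1}.
\end{align*}

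To recognize the inner $k$-sum as a $_2F_1$, I would apply the Pochhammer splits $(Q)_{m+k} = (Q)_m (Q+mI)_k$ and $(R-P)_{m+k} = (R-P)_m (R-P+mI)_k$, convert $\binom{m}{k} = (-1)^k (-m)_k / k!$, and identify
\begin{align*}
\sum_{k=0}^m \binom{m}{k} \frac{(Q+mI)_k}{(R-P+mI)_k} = {}_2F_1\left(-mI, Q+mI; R-P+mI; -1\right).
\end{align*}
Collecting the gamma prefactors with $\binom{-P}{m} = (-1)^m (P)_m / m!$ produces the right-hand side of (\ref{thm5a}).

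I expect the main obstacle to be bookkeeping rather than ingenuity: one must be careful that the commutativity among $P$, $Q$, $R$, and all scalar shifts $nI$ is preserved throughout, so that factorizations such as $(1-u^3)^{-P} = (1-u)^{-P}(1+u+u^2)^{-P}$ and the Pochhammer splittings are legitimate in the matrix setting, and one must justify interchanging the double sum with the integral using the positive-stability of $R-P$, $R-Q$, and $R-Q-P$ (exactly the hypotheses imported from Theorem \ref{thm2}). Once these technicalities are handled, the computation proceeds in direct parallel with the scalar case.
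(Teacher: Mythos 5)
Your proposal is correct and follows essentially the same route as the paper's proof: specializing Theorem \ref{thm4} at $q=3$, $z=1$, factoring $1-u^3=(1-u)(1+u+u^2)$, expanding $(1+u(1+u))^{-P}$ by the matrix binomial series together with the finite expansion of $(1+u)^m$, evaluating the resulting Beta matrix integrals, and applying the Pochhammer splittings to recognize the inner sum as ${}_2F_1(-mI,Q+mI;R-P+mI;-1)$. The only cosmetic difference is that you write $\binom{-P}{m}$ where the paper writes $(-1)^m(P)_m/m!$, which are identical.
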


\begin{proof}
Substituting $q=3$ and $z=1$ in Theorem \ref{thm4}, we have
\begin{align*}
& {}_4F_3\left(P,\frac{Q}{3},\frac{Q+I}{3},\frac{Q+2I}{3};\frac{R}{3},\frac{R+I}{3},\frac{R+2I}{3};1\right) \\ & \quad= \frac{\Gamma(R)}{\Gamma(Q)\Gamma(R-Q)} \int_0^1 u^{Q-I} (1-u)^{R-Q-I} (1-u^3)^{-P}du \\
& \quad= \frac{\Gamma(R)}{\Gamma(Q)\Gamma(R-Q)} \int_0^1 u^{Q-I} (1-u)^{R-Q-P-I} (1+u+u^2)^{-P} du \\ 
& \quad= \frac{\Gamma(R)}{\Gamma(Q)\Gamma(R-Q)} \sum_{m=0}^\infty \frac{(P)_m (-1)^m}{m!} \int_0^1 u^{Q+(m-1)I} (1-u)^{R-Q-P-I} (1+u)^{m} du \\
& \quad= \frac{\Gamma(R)}{\Gamma(Q)\Gamma(R-Q)} \sum_{m=0}^\infty \sum_{k=0}^m \frac{(P)_m (-1)^m}{m!} {m \choose k} \int_0^1 u^{Q+(m+k-1)I} (1-u)^{R-Q-P-I} du \\
& \quad= \frac{\Gamma(R)\Gamma(R-Q-P)}{\Gamma(R-P)\Gamma(R-Q)} \sum_{m=0}^\infty \sum_{k=0}^m \frac{(P)_m (-1)^m}{m!} {m \choose k} \frac{(Q)_m (Q+m)_k}{(R-P)_m(R-P+m)_k} \\
& \quad= \frac{\Gamma(R)\Gamma(R-Q-P)}{\Gamma(R-P)\Gamma(R-Q)} \sum_{m=0}^\infty \frac{(-1)^m (P)_m (Q)_m}{m! (R-P)_m} \ {}_2F_1\left(-mI,Q+mI;R-P+mI;-1\right).
\end{align*}
This completes the proof.
\end{proof}

\begin{theorem}\label{thm7}
Let $P, Q$ and $R$ be commuting matrices in $\mathbf{C}^{R\times R}$ such that $Q,R$ and $R-Q$ are positive stable. Then, for $|z|<1$, the following integral representation holds true:
\begin{align}\nonumber
&{}_{q+1}F_q\left(P,\frac{Q}{q},\frac{Q+I}{q},\dots,\frac{Q+(q-1)I}{q};\frac{R}{q},\frac{R+I}{q},\dots,\frac{R+(q-1)I}{q};z\right) \\ & \qquad = \frac{\Gamma(R)}{\Gamma(Q)\Gamma(R-Q)} \sum_{m=0}^\infty {R-Q-I \choose m} \frac{(-1)^m}{Q+mI} \ {}_2F_1\left(P,\frac{Q+mI}{2};\frac{Q+mI}{2}+I;z\right). \label{thm7a}
\end{align}
\end{theorem}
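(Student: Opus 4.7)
The plan is to obtain \eqref{thm7a} by starting from the integral representation already established in Theorem \ref{thm4}, expanding the factor $(1-u)^{R-Q-I}$ via a matrix binomial series, and then recognizing the resulting inner integral as an Euler-type representation of a ${}_2F_1$ matrix function. Concretely, from \eqref{thm4a} the left-hand side of \eqref{thm7a} equals
\begin{align*}
\frac{\Gamma(R)}{\Gamma(Q)\Gamma(R-Q)}\int_0^1 u^{Q-I}(1-u)^{R-Q-I}(1-zu^q)^{-P}\,du.
\end{align*}
I would first expand $(1-u)^{R-Q-I}=\sum_{m=0}^{\infty}\binom{R-Q-I}{m}(-1)^m u^m$ and interchange summation and integration; this is legitimate because $R-Q$ is positive stable, the series converges uniformly on compact subsets of $[0,1)$, and the factor $u^{Q-I}$ keeps the integrand integrable near $0$. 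The problem is thereby reduced to evaluating, for each $m\geq 0$, the integral
\begin{align*}
I_m=\int_0^1 u^{Q+(m-1)I}(1-zu^q)^{-P}\,du.
\end{align*}

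The key step is to recognize $I_m$ as a degenerate Euler-type integral for ${}_2F_1$. Performing the substitution $v=u^q$ yields
\begin{align*}
I_m=\frac{1}{q}\int_0^1 v^{(Q+mI)/q-I}(1-zv)^{-P}\,dv,
\end{align*}
which is of the form $\int_0^1 v^{B-I}(1-zv)^{-P}\,dv$ with $B=(Q+mI)/q$; note that no $(1-v)^{C-B-I}$ factor is present, corresponding to $C=B+I$. Expanding $(1-zv)^{-P}=\sum_k (P)_k (zv)^k/k!$ and integrating termwise, and then using the elementary identity $(B)_k(B+I)_k^{-1}=B(B+kI)^{-1}$, one collapses the series to
\begin{align*}
I_m=\frac{1}{Q+mI}\,{}_2F_1\!\left(P,\frac{Q+mI}{q};\frac{Q+mI}{q}+I;z\right),
\end{align*}
since the prefactor $1/q$ absorbs cleanly into the factor $(Q+mI)^{-1}$. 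Substituting this back into the outer sum over $m$ produces exactly \eqref{thm7a}.

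The main technical obstacle will be the careful bookkeeping of matrix Pochhammer symbols and the justification that every manipulation — the matrix binomial expansion of $(1-u)^{R-Q-I}$, the interchange of the sum with the integral, the termwise integration of the series for $(1-zv)^{-P}$, and the identification of the result with a ${}_2F_1$ matrix function — is valid under the stated positive-stability and commutativity hypotheses on $Q$, $R$, and $R-Q$. Since $P,Q,R$ are assumed to commute, every relevant matrix Pochhammer symbol and its inverse commute freely, so the scalar-style manipulations transcribe to the matrix setting; nevertheless, one must keep the ordering visible when handling $(B)_k(B+I)_k^{-1}$-type ratios so that the factor $(Q+mI)^{-1}$ ends up on the correct side of the ${}_2F_1$ term.
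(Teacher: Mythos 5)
Your proposal follows essentially the same route as the paper's own proof: expand $(1-u)^{R-Q-I}$ by the binomial series, interchange sum and integral, substitute $s=u^q$, and identify the remaining integral as a degenerate Euler integral for ${}_2F_1$ with upper and lower parameters differing by $I$. One point worth flagging: your computation correctly yields the parameter $\frac{Q+mI}{q}$ in the resulting ${}_2F_1$, whereas the displayed formula \eqref{thm7a} (and the paper's own proof) write $\frac{Q+mI}{2}$, which agrees only when $q=2$; this appears to be a typo in the paper, so you should state that your derivation proves the corrected identity rather than claiming it reproduces \eqref{thm7a} exactly as printed.
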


\begin{proof}
From Theorem \ref{thm4}, we have
\begin{align*}
&{}_{q+1}F_q\left(P,\frac{Q}{q},\frac{Q+I}{q},\dots,\frac{Q+(q-1)I}{q};\frac{R}{q},\frac{R+I}{q},\dots,\frac{R+(q-1)I}{q};z\right) \\ & \quad= \frac{\Gamma(R)}{\Gamma(Q)\Gamma(R-Q)} \int_0^1 u^{Q-I} (1-u)^{R-Q-I} (1-zu^q)^{-P} du \\
& \quad= \frac{\Gamma(R)}{\Gamma(Q)\Gamma(R-Q)} \int_0^1 u^{Q-I} \left[ \sum_{m=0}^\infty {R-Q-I \choose m} (-u)^m \right] (1-zu^q)^{-P} du \\
& \quad= \frac{\Gamma(R)}{\Gamma(Q)\Gamma(R-Q)} \sum_{m=0}^\infty {R-Q-I \choose m} (-1)^m \int_0^1 u^{Q+(m-1)I} (1-zu^q)^{-P} du.
\end{align*}
Substitution $s=u^q$ in the integral on the right hand side of above equation yields
\begin{align*}
&{}_{q+1}F_q\left(P,\frac{Q}{q},\frac{Q+I}{q},\dots,\frac{Q+(q-1)I}{q};\frac{R}{q},\frac{R+I}{q},\dots,\frac{R+(q-1)I}{q};z\right) \\ & \quad= \frac{\Gamma(R)}{\Gamma(Q)\Gamma(R-Q)} \sum_{m=0}^\infty {R-Q-I \choose m} \frac{(-1)^m}{q} \frac{\Gamma\left(\frac{Q+mI}{q}\right)\Gamma(1)}{\Gamma\left(\frac{Q+mI}{q}+I\right)} \ {}_2F_1\left(P,\frac{Q+mI}{2};\frac{Q+mI}{2}+I;z\right).
\end{align*}
Now using the Pochhammer matrix symbols (\ref{PCH2}) yields the right hand side of Theorem \ref{thm7}.
\end{proof}

\section*{Statements \& Declarations}

\subsection*{Funding} Not applicable.

\subsection*{Conflicts of interest/Competing interests} The authors declare that they have no competing interests.

\subsection*{Authors contributions}
Both the authors have equally contributed of reading and writing the manuscript.


\section*{References}

\begin{enumerate}

\bibitem{Abdalla} M. Abdalla, ``On the incomplete hypergeometric matrix functions", Ramanujan J. {\bf 43}(3), 663-678 (2017).

\bibitem{Abdalla1} M. Abdalla, ``Special matrix functions: characteristics, achievements and future directions, Linear and Multilinear Algebra", {\bf 68}(1), 1-28 (2020). 

\bibitem{Constantine} A.G. Constantine and R.J. Muirhead, ``Partial differential equations for hypergeometric functions of two argument matrices", J. Multivariate. Anal. {\bf 2}, 332-338 (1972).

\bibitem{Cortes} J.C. Cort\'es and L. J\'odar, ``Asymptotics of the modified Bessel and incomplete Gamma matrix functions", Appl. Math. Lett. {\bf 16}(6), 815-820 (2003).

\bibitem{Dunford} N. Dunford and J. Schwartz, Linear operators, part-I (New York (NY): Addison-Wesley, 1957).

\bibitem{Dwivedi} R. Dwivedi and V. Sahai, ``On the hypergeometric matrix functions of two variables", Linear Multilinear Algebra. {\bf 66}(9), 1819-1837 (2017).

\bibitem{Dwivedi1} R. Dwivedi and V. Sahai, ``On the basic hypergeometric matrix functions of two variables", Linear and Multilinear Algebra. {\bf 67}(1), 1-19 (2019).

\bibitem{Golud} G.H. Golud and C.F. Van Loan, Matrix computations (London: The Johns Hopkins Press Ltd, 1996).

\bibitem{HuGD} G.D. Hu and M. Liu, ``The weighted logarithmic matrix norm and bounds of the matrix exponential", Linear Algebra Appl. {\bf 390}, 145-154 (2004).

\bibitem{James} A.T. James, Special functions of matrix and single argument in statistics. In: Askey RA, editor. Theory and application of special functions (New York, Academic Press, 1975).

\bibitem{Jodar1} L. J\'odar and J.C. Cort\'es, ``On the hypergeometric matrix function", J. Comp. Appl. Math. {\bf 99}(1-2), 205-217  (1998).


\bibitem{Jodar3} L. J\'odar and J.C. Cort\'es, ``Some properties of gamma and beta matrix functions", Appl. Math. Lett. {\bf 11}(1), 89-93 (1998).

\bibitem{Seaborn} J.B. Seaborn, Hypergeometric functions and their applications (New York (NY), Springer, 1991).

%
%


\end{enumerate}

\end{document}